\numberwithin{equation}{section}
\def\abs#1{\left| #1\right|}
\def\absval#1#2{\left|#1\right|_{#2}}
\newtheorem{theorem}{Theorem}[section]
\newtheorem{lemma}[theorem]{Lemma}
\newcommand{\Q}{\mathbb{Q}}
\newcommand{\C}{\mathbb{C}}
\newcommand{\N}{\mathbb{N}}
\title{Three approaches to detecting discrete integrability}
\author{R. G. Halburd and R. J.  Korhonen}
\begin{document}

\begin{abstract}
A class of discrete equations is considered from three perspectives corresponding to three measures of the complexity of solutions:
the (hyper-) order of meromorphic solutions in the sense of Nevanlinna, the degree growth of iterates over a function field and the height growth of iterates over the rational numbers.  In each case, low complexity implies a form of singularity confinement which results in a known discrete Painlev\'e equation.
\end{abstract}

\maketitle

\section{Introduction}
In this paper we will study the equation
    \begin{equation}\label{maineqD}
    y_{j+1}+y_{j-1}=\frac{a_j y_j^2+b_j y_j +c_j}{y_j^2},
    \end{equation}
where $c_j\not\equiv 0$,
from three different perspectives.  In each approach we will explore a different measure of complexity of solutions and we will interpret $y_j$ and the coefficients in a slightly different way.  Integrability has long been associated with the slow growth of complexity \cite{veselov:92}, however the first highly sensitive, yet heuristic, test for integrability of equations such as \eqref{maineqD} was the idea of singularity confinement \cite{grammaticosrp:91,ramanigh:91}.  In each of the three approaches studied here, we will use an analogue of singularity (non-) confinement, suitably re-interpreted, to get a lower bound on the relevant measure of complexity.

In analogy with the Painlev\'e property for differential equations, the idea behind singularity confinement is to study the behaviour of solutions at singular values of the dependent variable.  For equation \eqref{maineqD} we note that, if the coefficient functions are finite, the only way that $y_{j+1}$ can become infinite starting from finite values is if $y_j=0$.  In order to better understand this situation, we consider the perturbed initial conditions
$y_{j-1}=\kappa$, where $\kappa$ is arbitrary, and $y_j=\epsilon$ and expand the next few iterates as Laurent series about $\epsilon=0$.
If $c_j\ne 0$, this gives
\begin{equation}
\label{sing-conf}
\begin{split}
y_{j+1} &=\frac{c_j}{\epsilon^2}+\frac{b_j}{\epsilon}+O(1),
\\
y_{j+2} &=a_{j+1}-\epsilon+\frac{b_{j+1}}{c_j}\epsilon^2+O(\epsilon^3),
\\
y_{j+3} &=-\frac{c_j}{\epsilon^2}-\frac{b_j}{\epsilon}+\frac{(b_{j+2}a_{j+1}+c_{j+2})-\epsilon+O(\epsilon^2)}{\left(a_{j+1}-\epsilon+(b_{j+1}/c_j)\epsilon^2+O(\epsilon^3)\right)^2}+O(1).
\end{split}
\end{equation}
In the limit $\epsilon\to 0$, $y_{j+3}$ will be infinite unless $a_{j+1}=0$.  If $a_{j+1}=0$ we have
$$
y_{j+3}=\frac{c_{j+2}-c_j}{\epsilon^2}-\frac{b_{j+2}-2b_{j+1}+b_j}{\epsilon}+O(1).
$$
Taking the limit $\epsilon\to 0$ we see that $y_{j+3}=\infty$ unless 
\begin{equation}
\label{bcconds}
a_{j+1}=0,\quad
b_{j+2}-2b_{j+1}-b_j=0\quad\mbox{and}\quad
c_{j+2}-c_j=0.
\end{equation}
If these conditions are satisfied the singularity is said to be confined as $y_n$ remains finite (at least for the next few iterates).  Demanding that all singularities are confined in this way means that conditions \eqref{bcconds} must hold for all $j$. 
Hence equation \eqref{maineqD} becomes
\begin{equation}
\label{dp1-delta}
y_{j+1}+y_{j-1}=\frac{(\alpha j+\beta) y_j +(\gamma+\delta (-1)^j)}{y_j^2},
\end{equation}
where $\alpha$, $\beta$, $\gamma$ and $\delta$ are constants.  In order to avoid technicalities in some of the approaches that follow, we will restrict ourselves to the case in which
$a_j$, $b_j$ and $c_j$ are rational functions of $j$. This forces $\delta=0$ in equation (\ref{dp1-delta}), leaving us with the equation
\begin{equation}
\label{dp1-nodelta}
y_{j+1}+y_{j-1}=\frac{(\alpha j+\beta) y_j +\gamma}{y_j^2}.
\end{equation}
Equation (\ref{dp1-nodelta}) is known to have a continuum limit to the first Painlev\'e equation,  it is the compatibility condition for a related linear problem and it has been derived from the Schlesinger transformations of the third Painlev\'e equation \cite{fokasgr:93}.

Although singularity confinement has been successfully used to find many integrable discrete equations, Hietarinta and Viallet \cite{hietarintav:98} gave an example of an equation in which the singularities are confined and yet the dynamics appear to be chaotic.  

The first of the three approaches to be considered in this paper is to study the growth, in the sense of Nevanlinna, of meromorphic solutions of difference equations.  
To this end we replace equation \eqref{maineqD} with its complex analytic version:
    \begin{equation}\label{maineq}
    y(z+1)+y(z-1)=\frac{a(z) y(z)^2+b(z) y(z) +c(z)}{y(z)^2},
    \end{equation}
where $a$, $b$ and $c$ are rational functions and $y$ is a non-rational meromorphic function.  It was suggested in \cite{ablowitzhh:00} that the existence of sufficiently many finite-order meromorphic solutions is a natural difference equation analogue of the Painlev\'e property.  
In \cite{halburdk:07PLMS} the authors used the existence of an admissible finite-order meromorphic solution to reduce a class of difference equations to a short list of difference Painlev\'e-type equations.  Here {\em admissible} means that the solution grows faster than the coefficients in a precise sense. In the case in which the coefficients are rational functions as considered here, this amounts to saying that the solution is non-rational.  In \cite{halburdkt:14} it was shown that the same conclusions remain valid if finite-order is replaced by hyper-order less than one.  

The order and hyper-order of a meromorphic function will be defined in section 2 of the present paper and we will prove the following.
\begin{theorem}\label{mainthm}
Let $y$ be a non-rational meromorphic solution of 
equation \eqref{maineq},
where $a$, $b$ and $c\not\equiv0$ are rational functions. If the hyper-order
of $y$ is less than one, then $a\equiv 0$, $b(z)=Az+B$ and $c(z)=C$, where $A$,
$B$ and $C$ are complex constants.
\end{theorem}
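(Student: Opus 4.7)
The plan is to promote the formal singularity-confinement calculation of equations \eqref{sing-conf}--\eqref{bcconds} to a rigorous Nevanlinna-theoretic statement, using the difference logarithmic derivative lemma for functions of hyper-order strictly less than one established in \cite{halburdkt:14}.

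The first step is purely local. Rewrite \eqref{maineq} as
\[
y(z+1) + y(z-1) - a(z) = \frac{b(z)}{y(z)} + \frac{c(z)}{y(z)^{2}}.
\]
At a zero $z_{0}$ of $y$ at which $c(z_{0})\ne 0$, the right-hand side has a pole of order exactly $2$, and since we may discard the finitely many zeros of $y$ at which a neighbouring iterate is also singular, $y$ has a pole of order exactly $2$ at $z_{0}+1$. Substituting a Laurent expansion of $y$ at $z_{0}$ into three consecutive iterations reproduces pointwise, for the analytic solution, exactly the expansion \eqref{sing-conf}: $y$ acquires a pole at $z_{0}+3$ unless all three identities of \eqref{bcconds} hold numerically at $z_{0}$.

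The second step globalises this. Suppose, for contradiction, that one of $a(z+1)$, $b(z+2)-2b(z+1)+b(z)$, $c(z+2)-c(z)$ is not identically zero; as a rational function it then has only finitely many zeros. For all but finitely many zeros $z_{0}$ of $y$, the local analysis at the pole $z_{0}+3$ likewise fails to confine, producing a further pole at $z_{0}+5$, and inductively an arithmetic progression of poles $z_{0}+2k+1$, $k\ge 0$, each of order two. Counting with multiplicity over zeros in $|z|\le r$ yields a lower bound of the shape
\[
N(r,y) \;\gtrsim\; \sum_{k \ge 0} N\!\left(r - 2k - O(1),\, \tfrac{1}{y}\right),
\]
which, combined with the first main theorem and the hyper-order $<1$ shift identity $N(r+1,y) = N(r,y)+S(r,y)$, forces $T(r,y)$ to grow faster than any function of hyper-order less than one, outside an exceptional set of finite logarithmic measure. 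This contradicts the hypothesis on $y$, so each of the three rational expressions must vanish identically.

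Finally, from $a(z+1)\equiv 0$ one gets $a\equiv 0$; a rational function $c$ satisfying $c(z+2)\equiv c(z)$ is $2$-periodic, hence constant, giving $c(z)=C$; and a rational function $b$ with vanishing second forward difference must be a polynomial of degree at most one, since iterating $b(z+2)=2b(z+1)-b(z)$ backwards from any pole of $b$ would produce infinitely many poles. This yields $b(z)=Az+B$, completing the conclusion. The main technical obstacle is the globalising step: one must control pole multiplicities along the unconfined chain, absorb the finitely many exceptional points where $c$ vanishes or a coefficient has a pole, and handle the exceptional set coming from the hyper-order $<1$ difference lemma with enough care to retain the quantitative lower bound on $N(r,y)$.
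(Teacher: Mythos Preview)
Your proposal has the right spirit---local Laurent analysis plus a growth contradiction---but the globalising step contains more than a technical obstacle; as written it does not go through.

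First, there is no reason for zeros of $y$ to be simple, so ``pole of order exactly $2$'' is unjustified; the paper works throughout with a zero of order $k$ and tracks poles of order $2k$. More seriously, your claim that an unconfined zero at $z_{0}$ produces poles at \emph{every} $z_{0}+2k+1$ is not established and is generally false. To propagate the pole from $z_{0}+3$ to $z_{0}+5$ you need control on $y(z_{0}+4)$, and the same local computation gives $y(z_{0}+4)=a(z_{0}+3)-a(z_{0}+1)+O((z-z_{0})^{k})$; when $a$ is a nonzero constant (so $a(z+1)\not\equiv 0$ yet $a(z+3)-a(z+1)\equiv 0$) this is a new zero of order at least $k$, which restarts the pattern rather than continuing your arithmetic progression. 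Even if the chain did persist, the inequality $N(r,y)\gtrsim\sum_{k\ge 0}N(r-2k,1/y)$ overcounts: the pole you attribute to the zero $z_{0}$ at step $2k+1$ may be the very same pole attributed to the zero $z_{0}+2$ at step $2k-1$, so the sum cannot be bounded above by $N(r,y)$. Finally, ``finitely many zeros at which a neighbouring iterate is also singular'' is asserted without proof; a transcendental $y$ can certainly have infinitely many such coincidences.

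The paper avoids all of this by arguing in the opposite direction. Rather than generating many poles from one zero, it shows that every zero of $y$ (of order $k$, for $|z|$ large) can be grouped with nearby poles in such a way that within each group the ratio of zeros to poles is at most $4/5$, carefully splitting into cases according to whether $a\equiv 0$ and to the pole order of $y(z-\sigma)$; see sequences \eqref{seq1}--\eqref{seq2c}. This yields the clean counting inequality $n(r,1/y)\le\tfrac{4}{5}\,n(r+2,y)+O(1)$, and then a single black-box lemma (Lemma~\ref{nevlemma}) converts this into $\varsigma(y)\ge 1$. No infinite chain, no overcounting, and no appeal to the difference logarithmic-derivative lemma is needed.
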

This is a special case of the classification in \cite{halburdk:07PLMS,halburdkt:14}.  By restricting ourselves to the case of rational coefficients, we eliminate many technical issues that arise from Nevanlinna theory, which allows us to concentrate on the role played by singularity confinement-type arguments in obtaining a lower bound on the hyper-order of solutions.  

In section 3 we will consider equation \eqref{maineqD} as a discrete equation (i.e. $j\in\mathbb{Z}$) but each $y_j$ is a rational function of an external complex variable $z$.  The natural measure of complexity here is the degree growth of the rational iterates $y_j$.  This is very close to the idea of algebraic entropy \cite{bellonv:99,hietarintav:98} in which one considers the degree $d_j$ of the $j^{\mbox{th}}$ iterate of equation \eqref{maineqD} as a rational function of $y_0$ and $y_1$.  The definition of the {\em algebraic entropy} is
$$
\lim_{j\to\infty}\frac{\log d_j}{j}.
$$
Zero algebraic entropy is associated with integrability.  By considering $y_0\equiv y_0(z)$ and $y_1\equiv y_1(z)$ as rational functions of $z$ we can use more elementary arguments based on complex analysis of a single variable.  It also gives us a more refined tool to consider one-parameter families of solutions, rather than considering the whole solution space at once.  If $y_0$ and $y_1$ are degree one rational functions, then the degree of $y_n$ will be the same as $d_j$ unless some cancellation has occurred on substitution into the expression for $y_n$ as a function of $y_0$ and $y_1$.  

Finally in section 4 we will consider the case in which the coefficients $a_j$, $b_j$, $c_j$ are rational functions of $j$ with rational coefficients and the solution of the discrete equation \eqref{maineqD}, $y_j\in\mathbb{Q}$ for all sufficiently large $j$.  In this setting the natural measure of complexity is the height.  The logarithmic height of $p/q$, where $p$ and $q$ are co-prime integers, is $h(p/q)=\log\max(|p|,|q|)$.  If the logarithmic height of all solutions grows polynomially, we say that the equation is {\em Diophantine integrable} \cite{halburd:05}.  This idea was suggested by applying the observation of Osgood \cite{osgood:81,osgood:84,osgood:85} and Vojta \cite{vojta:87} that there is a formal similarity between Nevanlinna theory and Diophantine approximation to the first approach to discrete integrability above.  The logarithmic height can be expressed as a sum over all of the (suitably normalised) absolute values on $\mathbb{Q}$ (i.e., the usual absolute value and the $p$-adic absolute values).  We will show how a calculation similar to the singularity confinement sequence \eqref{sing-conf} can be formulated in which the small quantity $\epsilon$ is small with respect to an arbitrary absolute value on $\mathbb{Q}$.  This then induces a lower bound on the height.
We will highlight the similarities between the previous two approaches and the proof of the following theorem, which appears in
\cite{halburdm} and the PhD thesis of Will Morgan \cite{morgan:10}.
\begin{theorem}
\label{DiophantineThm}
Let $a_n$, $b_n$ and $c_n\not\equiv 0$ be rational functions with coefficients in $\mathbb{Q}$.  Suppose that for sufficiently large $r_0$, $y_n\in\mathbb{Q}$  solves equation
\eqref{maineqD} for all $n\ge r_0$.  If
\begin{equation}
\label{admissible}
\sum_{n=r_0}^r \{h(a_n)+h(b_n)+h(c_n)\}=o\left(\sum_{n=r_0}^r h(y_n)\right)
\end{equation}
and 
\begin{equation}
\sum_{n=r_0}^r h(y_n)\le K r^\rho,
\label{Diobnd}
\end{equation}
for some positive constants $K$ and $\rho$, then equation \eqref{maineqD} reduces to equation \eqref{dp1-nodelta}.
\end{theorem}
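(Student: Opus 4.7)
The plan is to run a $v$-adic analogue of the singularity confinement calculation \eqref{sing-conf} at every place $v$ of $\mathbb{Q}$ and translate the resulting local behaviour into a global height estimate. Recall that for $x\in\mathbb{Q}$ one has $h(x)=\sum_v \log^+|x|_v$ with $\log^+ t:=\max(0,\log t)$, the sum running over the archimedean and $p$-adic places with their standard normalisations. The strategy is to let $v$-adically small values of $y_n$ play the role that zeros of $y$ played in the Nevanlinna proof of Theorem \ref{mainthm}, while the admissibility hypothesis \eqref{admissible} replaces the statement there that the solution grows faster than the coefficients.

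First I would fix $n$ and a place $v$ and consider a near-singular event $|y_n|_v=|\epsilon|_v$ that is much smaller at $v$ than the values $|a_j|_v,|b_j|_v,|c_j|_v$ at steps $j=n,n+1,n+2$. Applied directly to \eqref{maineqD}, non-archimedean (or archimedean) analysis reproduces the expansion \eqref{sing-conf} at $v$: one obtains $\log^+|y_{n+1}|_v = 2\log|\epsilon|_v^{-1} + O(h(c_n))$, the leading $v$-adic value of $y_{n+2}$ is that of $a_{n+1}$, and the principal part of $y_{n+3}$ is controlled by the combinations appearing in \eqref{bcconds}. If any of the three equations in \eqref{bcconds} fails at step $n$, the $v$-adic pole produced at step $n+1$ is \emph{not} cancelled at step $n+3$, and the contribution to $\log^+|y_{n+3}|_v$ still contains a term of size $2\log|\epsilon|_v^{-1}$ modulo coefficient heights.

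Summing these local contributions over $n\in[r_0,r]$ and over all $v$, a Diophantine counterpart of the decomposition of the Nevanlinna characteristic into counting and proximity functions lets one relate the total mass of small-$y$ events to $\sum_n h(y_n)$, up to an error bounded by $\sum_n \{h(a_n)+h(b_n)+h(c_n)\}$, which \eqref{admissible} renders negligible. If \eqref{bcconds} fails on an infinite set of indices, the resulting inequality forces a lower bound of the shape
\[
\sum_{n=r_0}^r h(y_n) \ge (1+c)\sum_{n=r_0}^r h(y_n) - o\!\left(\sum_{n=r_0}^r h(y_n)\right)
\]
for some constant $c>0$, which is impossible once $\sum_n h(y_n)$ is large. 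The polynomial-growth hypothesis \eqref{Diobnd} is precisely what converts the various error terms into genuine $o(\cdot)$ remainders. Therefore \eqref{bcconds} must hold for all sufficiently large $n$; since $a_n$, $b_n$, $c_n\in\mathbb{Q}(n)$, these difference constraints integrate to $a_n\equiv 0$, $b_n=\alpha n+\beta$, $c_n=\gamma$, which is exactly equation \eqref{dp1-nodelta} (a $(-1)^n$ term being excluded by rationality in $n$).

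The main obstacle is the Diophantine analogue of the difference-analogue lemma on the logarithmic derivative, namely an effective shift bound controlling $h(y_{n\pm 1})$ in terms of $h(y_n)$ and the heights of the coefficients. This is what converts the pointwise $v$-adic expansion at a single $n$ into a quantitative bound on the cumulative height and what allows the polynomial-growth hypothesis \eqref{Diobnd} to bite. Once such a shift inequality is available, as developed in \cite{halburdm,morgan:10}, the singularity-confinement bookkeeping above closes the argument in direct parallel with the Nevanlinna proof of Theorem \ref{mainthm}.
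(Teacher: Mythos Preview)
Your outline is essentially the approach the paper takes: run a $p$-adic singularity-confinement expansion at each place, sum over places to pass to heights, use \eqref{admissible} to absorb coefficient errors, and contradict \eqref{Diobnd}. Two points of divergence are worth noting, however.

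First, the paper organises the argument in two stages rather than attacking all three conditions in \eqref{bcconds} at once. It first shows $a_n\equiv 0$ by the following clean device: fix a place $p$, define an explicit threshold $\epsilon_n$ from the coefficient data (equation \eqref{epsilon1}), and split $\{r_0,\dots,r\}$ into $S_1(r)=\{n:|y_n|_p<\epsilon_n\}$ and $S_2(r)=\{n:|y_n|_p\ge\epsilon_n\}$. Lemma~\ref{DiophantineLemma1} says that each index in $S_1(r)$ is flanked by a large value $|y_{m\pm1}|_p\ge|y_m|_p^{-(2-\delta)}$, which yields $\sum_{S_1}\log^+|y_k|_p^{-1}\le\frac{1}{2-\delta}\sum\log^+|y_k|_p$; the $S_2$ piece is bounded by coefficient data. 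Summing over $p$ and using $h(1/x)=h(x)$ gives
\[
\sum_{k=r_0}^{r} h(y_k)\le \frac{1}{2-\delta}\sum_{k=r_0-1}^{r+1} h(y_k)+o\!\left(\sum_{k=r_0}^{r} h(y_k)\right),
\]
which forces exponential growth of the partial sums, contradicting \eqref{Diobnd}. Only after $a_n\equiv 0$ does the paper invoke the finer expansion (Lemma~\ref{singularityconfinementtheorem}) that exposes the combinations $c_{k+2}-c_k$ and $b_{k+2}-2b_{k+1}+b_k$.

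Second, your final paragraph misidentifies the technical hinge. No analogue of the lemma on the logarithmic derivative (a generic bound on $h(y_{n\pm1})$ in terms of $h(y_n)$) is used or needed; the $S_1/S_2$ split together with Lemma~\ref{DiophantineLemma1} does all the work, and the ``shift'' from $\sum_{r_0}^{r}$ to $\sum_{r_0-1}^{r+1}$ is exactly what \eqref{Diobnd} controls. Recasting your sketch in those terms would make it a proof rather than a plan.
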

A similar result leading to the discrete Painlev\'e II equation has been derived in \cite{alghassanih:15}.
Heights were first used in Abarenkova,  Angl{\`e}s d'Auriac,  Boukraa, Hassani and Maillard \cite{abarakovaabhm:99} to estimate entropy.   Heights have also been used to detect low complexity solutions in Silverman \cite{silverman:14} and Roberts and Vivaldi \cite{robertsv:15}.

\section{Existence of meromorphic solutions of hyper-order less than one}

In this section we will use the slow growth rate of meromorphic solutions to detect Painlev\'e type equations out of a natural class of second-order
difference equations. Our aim is to review the method of \cite{halburdk:07PLMS} by going through a simple case requiring as few technical details as possible. We will need a small number of concepts from Nevanlinna theory (see e.g. \cite{hayman:64}) such as the definitions of order, hyper-order and the counting function. Lemma \ref{nevlemma} below allows us to translate simple inequalities about the relative frequencies of zeros and poles into statements about the hyper-order.

Let $f$ be a meromorphic function in the complex plane. The {\em counting function} $n(r,f)$  is the number of poles of $f$ in the disc $\{z\in\C:|z|\leq r\}$, each pole counted according to its multiplicity. Moreover, we define $\log^+ x = \max\{\log x,0\}$ for any $x\geq 0$. Then
    $$
    N\left(r,f\right)=\int_0^r (n(t,f)-n(0,f))\frac{dt}{t} + n\left(0,f\right)\log r
    $$
is the {\em integrated counting function},
    $$
    m(r,f) = \int_0^{2\pi} \log^+ |f(re^{i\theta})|\frac{d\theta}{2\pi}
    $$
is the {\em proximity function}, and
    $$
    T(r,f) = m(r,f) + N(r,f)
    $$
is the {\em Nevanlinna characteristic function} of $f$. The order of $f$ is
    $$
    \sigma(f)=\limsup_{r\to\infty} \frac{\log T(r,f)}{\log r},
    $$
and the {\em hyper-order} is
    $$
    \varsigma(f)=\limsup_{r\to\infty} \frac{\log\log T(r,f)}{\log r}.
    $$

Note that by restricting the coefficients of \eqref{maineq} to be rational functions rules out the most general form of the difference Painlev\'e I equation, where $c$ is a period two function. The general case, was recovered in \cite{halburdk:07PLMS}.

\begin{proof}[Proof of Theorem~\ref{mainthm}: ] We will first show that if $|z|$ is sufficiently large, then each zero $\hat z$ of a non-rational meromorphic solution $y(z)$ of \eqref{maineq}  may be uniquely associated with a finite number of poles and
zeros of neighboring iterates $y(z\pm 1)$, $y(z\pm 2)$, $y(z\pm 3)$, $y(z\pm 4)$ such that the number of zeros
divided by the number of poles, both counting multiplicities, is
bounded by $4/5$ in each such grouping.

Let $y(z)$ be a meromorphic solution of \eqref{maineq} and assume first that $a(z)$ is not identically zero. Therefore, $a$, $b$ and $c$
have finitely many zeros (unless $b\equiv0$) and poles, since
they are rational functions of $z$, and so there exists an
$r_0\geq 0$ such that $a(z)\ne 0$ and $c(z)\ne 0$ for all $z$ satisfying
$|z|\geq r_0$.

Suppose that the solution $y(z)$ has a zero of multiplicity $k$ at some point $\hat{z}$ in the complex plane. Then $y(z)$ can be expressed as a Laurent series expansion
    \begin{equation}\label{laurent1}
     y(z) = \alpha (z-\hat{z})^{k}+\mathrm{O}((z-\hat{z})^{k+1}),\qquad \alpha\in\C\setminus\{0\},
    \end{equation}
in a sufficiently small  neighborhood of $\hat{z}$. All except finitely many zeros $\hat{z}$ of $y(z)$ satisfy $|\hat{z}|\geq r_0+1$, and so, for these zeros it follows that $c(\hat z+\sigma)\not=0$, where $\sigma=\pm1$. Now it follows by substituting \eqref{laurent1} to the equation \eqref{maineq} that $y(z+\sigma)$ has a pole at $\hat{z}$ of
order at least $2k$ for $\sigma=1$ or $\sigma=-1$. The order of the pole at $\hat{z}+\sigma$ can be strictly greater than $2k$ only if there is a pole of the same order $l>2k$ at both points $\hat{z}+1$ and $\hat{z}-1$. In this case we can find even more poles per zero compared to the case where the order of the pole at $\hat{z}+\sigma$ is equal to $2k$. Therefore, without loss of generality, we may assume that $y(z+\sigma)$ has a pole of order $2k$ at $\hat z$, and so
    \begin{equation}\label{laurent2}
      y(z+\sigma) = \beta_\sigma (z-\hat{z})^{-2k} + \mathrm{O}({(z-\hat{z})^{1-2k}}), \qquad \beta_\sigma\in\C\setminus\{0\}
    \end{equation}
for all $z$ in a small enough neighborhood of $\hat{z}$. Now, by shifting equation \eqref{maineq}, we obtain
    \begin{equation}\label{maineq_shift}
    y(z+2\sigma)+y(z)=\frac{a(z+\sigma) y(z+\sigma)^2+b(z+\sigma) y(z+\sigma) +c(z+\sigma)}{y(z+\sigma)^2}.
    \end{equation}
By substituting the Laurent series expansions \eqref{laurent1} and \eqref{laurent2} into \eqref{maineq_shift}, we have
    \begin{equation}\label{laurent3}
    y(z+2\sigma)= a(z+\sigma)  + \mathrm{O}((z-\hat{z})^{k})
    \end{equation}
in a neighborhood of $\hat{z}$. By continuing in this way, and summarizing the above, it follows that
    \begin{equation}\label{seq1}
    \begin{split}
    y(z) &= \alpha (z-\hat{z})^{k}+\mathrm{O}((z-\hat{z})^{k+1})\\
    y(z+\sigma) &= \beta_\sigma (z-\hat{z})^{-2k} + \mathrm{O}({(z-\hat{z})^{1-2k}})\\
    y(z+2\sigma)&= a(z+\sigma)  + \mathrm{O}((z-\hat{z})^{k})
    \\
    y(z+3\sigma) &=  - \beta_\sigma (z-\hat{z})^{-2k}  + \mathrm{O}((z-\hat{z})^{1-2k})
    \\
    y(z+4\sigma)&= a(z+3\sigma)-a(z+\sigma) + \mathrm{O}((z-\hat{z})^{k})
    \\
    \end{split}
    \end{equation}
where $\alpha$ and $\beta_{\sigma}$ are non-zero. Therefore, the zero of $y(z)$ of order $k$ may be grouped together with the pole of
$y(z+\sigma)$ of order $2k$. Note that even if
$a(z+3\sigma)-a(z+\sigma)=0$ we are free to associate the
available pole of $y(z+3\sigma)$ with the zero of
$y(z+4\sigma)$, if the zero is of order $k$ at most. If the zero of
$y(z+4\sigma)$ is of order $l>k$, then $\hat{z}+4$ is a starting point of another sequence of the type \eqref{seq1}. Hence the number of zeros divided by the number
of poles in the sequence \eqref{seq1} is less than or equal to 1/2 counting multiplicities, provided that $a\not\equiv0$ in \eqref{maineq}.

Suppose now that $a\equiv0$ so that equation \eqref{maineq}
reduces to
    \begin{equation}\label{mainaj0}
    y(z+1)+y(z-1)=\frac{b(z) y(z) +c(z)}{y(z)^2}.
    \end{equation}
We will again consider the case where $y(z)$ has a zero of order
$k$ at $z=\hat{z}$ assuming first that both $y(z+\sigma)$ and
$y(z-\sigma)$ have a pole at least of order $2k$. Then, as before, we may assume without loss of generality that the order of the pole is exactly $2k$, and so, by iterating \eqref{mainaj0}, it follows that
   \begin{equation}\label{seq2a}
    \begin{split}
    y(z) &= \alpha (z-\hat{z})^{k}+\mathrm{O}((z-\hat{z})^{k+1})\\
    y(z+\sigma) &= \beta_\sigma (z-\hat{z})^{-2k} + \mathrm{O}({(z-\hat{z})^{1-2k}})\\
    y(z+2\sigma)&=-\alpha (z-\hat{z})^{k}+\mathrm{O}((z-\hat{z})^{k+1})
    \\
    y(z+3\sigma) &=  - \beta_\sigma (z-\hat{z})^{-2k}  + \mathrm{O}((z-\hat{z})^{1-2k})
    \\
    y(z+4\sigma)&= \alpha (z-\hat{z})^{k}+\mathrm{O}((z-\hat{z})^{k+1}),
    \\
    \end{split}
    \end{equation}
where $\alpha\not=0$ and $\beta_{\sigma}\not=0$. In this sequence
the number of zeros divided by the number of poles is less than
or equal to 3/4, when multiplicities are taken into account.

Suppose now that $y(z)$ has a zero of order $k$ at $z=\hat{z}$ and
$y(z-\sigma)$ has a pole of order $l$ such that $k\leq l<2k$.
Then, by \eqref{mainaj0},
    \begin{equation}\label{seq2b}
    \begin{split}
    y(z-\sigma) &= \beta_\sigma (z-\hat{z})^{-l}+\mathrm{O}({(z-\hat{z})^{1-l}})\\
    y(z) &= \alpha (z-\hat{z})^k+\mathrm{O}((z-\hat{z})^{k+1})\\
    y(z+\sigma) &=  \frac{c(z)}{\alpha^2} (z-\hat{z})^{-2k}
    + \mathrm{O}({(z-\hat{z})^{1-2k}})\\
    y(z+2\sigma) &=-\alpha (z-\hat{z})^k+\mathrm{O}((z-\hat{z})^{k+1})
    \\
    y(z+3\sigma) &=  \frac{c(z+2\sigma)-c(z)}{\alpha^2} (z-\hat{z})^{-2k}  + \mathrm{O}((z-\hat{z})^{1-2k})
    \\
    \end{split}
    \end{equation}
where $\alpha\not=0$ and $\beta_\sigma\not=0$. The number of zeros
divided by the number of poles of $y$ in the set $\{\hat z - \sigma, \hat z, \hat z + \sigma, \hat z + 2\sigma \}$  is less than or
equal to $2/3$. It may happen that there is a zero of $y$ at $\hat z+3\sigma$, or at $\hat z+4\sigma$, but then this zero becomes a starting point of another sequence of one of the types \eqref{seq2a} or \eqref{seq2b}, or \eqref{seq2c} below. If there is another sequence of the type \eqref{seq2b} progressing in the opposite direction from the point $\hat z - \sigma$, then the corresponding zero-pole ratio of the two combined sequences in the set $\{\hat z - 4\sigma,\ldots, \hat z + 2\sigma \}$ is less than or equal to $4/5$.

We still need to deal with the case where $y(z)$ has a zero of
order $k$ and $y(z-\sigma)$ has a pole of order $l<k$ (or it
assumes a finite value.) The iteration of equation (\ref{mainaj0})
yields
    \begin{equation}\label{seq2c}
    \begin{split}
    y(z&+\sigma) = c(z) y(z)^{-2}+b(z) y(z)^{-1} -y(z-\sigma)\\
    y(z&+2\sigma)= -y(z)    + \frac{b(z+\sigma)}{c(z)}
    y(z)^2-\frac{b(z+\sigma)b(z)}{c(z)^2}y(z)^3+\mathrm{O}(y(z)^4)
    \\
    y(z&+3\sigma) =  (c(z+2\sigma)-c(z))y(z)^{-2} \\
    &+\left(-b(z)+\frac{2c(z+2\sigma)b(z+\sigma)}{c(z)}-b(z+2\sigma)\right)y(z)^{-1}  +y(z-\sigma) +O(1).
    \\
    \end{split}
    \end{equation}
There are $2k$ zeros (by taking $y(z)$ into account) and $4k$ poles
in the sequence \eqref{seq2c}, unless
    \begin{equation}
    \label{abeqns}
    c(z+2\sigma)-c(z)=0\mbox{  and
    }b(z+2\sigma)-2b(z+\sigma)+b(z)=0.
    \end{equation}
(To be exact, there are only $3k$ poles in \eqref{seq2c} if the first of the equations in \eqref{abeqns} holds and the second one doesn't. In this case the zero-pole ratio in this sequence is $2/3$.) Now unless equations (\ref{abeqns}) hold for all $z$ then at least
one of them will fail to hold for all sufficiently large $|z|$. If
these equations hold for all $z$ then \eqref{abeqns} become linear difference equations. Solving these equations, and taking into account that the coefficients $b(z)$ and $c(z)$ are rational functions by assumption, it follows that $b(z)=Az+B$ and $c(z)=C$ for complex constants $A$, $B$ and $C$. In this case, the proof is complete. Otherwise,
we have been able to associate each zero of $y(z)$ with an appropriate number of zeros
and poles of ``nearby'' iterates $y(z\pm1),\ y(z\pm2),\ y(z\pm3),\ y(z\pm4)$ for all sufficiently large
$|z|$ such that within each grouping the number of zeros divided by
the number of  poles is at most $4/5$. Therefore,
    \begin{equation}\label{n_inequality}
    n\left(r,\frac1y\right) \leq \frac45 n(r+2,y) + O(1).
    \end{equation}
Lemma~\ref{nevlemma} below, which is a special case of \cite[Lemma 2.1]{halburdk:PAMS}, applied to \eqref{n_inequality} with $a=0$, $s=2$ and $\tau=4/5$, implies that  the hyper-order of $y$ is at least one.

\begin{lemma}[\cite{halburdk:PAMS}]\label{nevlemma}
Let $f(z)$ be a non-rational meromorphic solution of
    \begin{equation}\label{Peq}
    P(z,f)=0
    \end{equation}
where $P(z,f)$ is difference polynomial in $f(z)$ with rational coefficients, and let $a\in\C$ satisfy $P(z,a)\not\equiv 0$. If there exists $s>0$ and $\tau\in(0,1)$ such that
    \begin{equation}\label{n_inequality_lemma}
    n\left(r,\frac{1}{f-a}\right) \leq \tau\, n(r+s,f) + O(1),
    \end{equation}
then the hyper-order $\varsigma(f)$ of $f$ is at least $1$.
\end{lemma}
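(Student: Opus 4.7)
The plan is to assume for contradiction that $\varsigma(f) < 1$ and then derive $T(r,f) = O(\log r)$, which contradicts the non-rationality of $f$. Four ingredients are needed: the first main theorem of Nevanlinna; an integration of the pointwise hypothesis \eqref{n_inequality_lemma}; the difference analog of the Mohon'ko theorem, activated by $P(z,a)\not\equiv 0$; and the Halburd--Korhonen--Tohge shift estimate $T(r+s,f) = (1+o(1))T(r,f)$, which is available precisely when $\varsigma(f) < 1$.

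First I would integrate the hypothesis. Multiplying \eqref{n_inequality_lemma} by $1/t$ and integrating from $r_0$ to $r$, substituting $u=t+s$ on the right, and using $(t+s)/t \le 1 + s/r_0$ for $t \ge r_0$, one obtains
\begin{equation*}
N\!\left(r, \tfrac{1}{f-a}\right) \le \tau'\, N(r+s,f) + O(\log r), \qquad r \ge r_0,
\end{equation*}
with $\tau' := \tau(1+s/r_0) < 1$ once $r_0$ is taken large enough.

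Next, since $f$ is a non-rational meromorphic solution of the difference polynomial equation $P(z,f)=0$ with rational coefficients and $P(z,a)\not\equiv 0$, the difference version of the Mohon'ko theorem yields $m(r, 1/(f-a)) = o(T(r,f))$ outside an exceptional set $E_1$ of finite logarithmic measure; this is where the hyper-order hypothesis is essentially used, via the sharp difference logarithmic derivative lemma. The first main theorem $T(r,f) = m(r, 1/(f-a)) + N(r, 1/(f-a)) + O(1)$ then gives $N(r, 1/(f-a)) \ge (1-o(1))T(r,f)$ for $r \notin E_1$. The same shift machinery delivers $T(r+s,f) = (1+o(1))T(r,f)$ for $r$ outside an exceptional set $E_2$ of finite logarithmic measure.

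Chaining the estimates for $r$ outside $E_1 \cup E_2$,
\begin{equation*}
(1-o(1))T(r,f) \le N\!\left(r, \tfrac{1}{f-a}\right) \le \tau'\, N(r+s,f) + O(\log r) \le \tau'(1+o(1))\,T(r,f) + O(\log r),
\end{equation*}
which forces $(1-\tau'+o(1))\,T(r,f) = O(\log r)$. Since $E_1\cup E_2$ has finite logarithmic measure, monotonicity of $T(r,f)$ propagates this to all sufficiently large $r$, contradicting the transcendence of $f$. The main obstacle is the difference Mohon'ko step: translating the algebraic non-degeneracy $P(z,a)\not\equiv 0$ into the small-proximity bound $m(r, 1/(f-a)) = S(r,f)$ is the technical heart of the argument, and this is what ties the integer threshold on hyper-order to the structure of the difference equation.
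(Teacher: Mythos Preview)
The paper does not itself prove Lemma~\ref{nevlemma}; it is quoted from \cite{halburdk:PAMS} as a black box and then applied with $a=0$, $s=2$, $\tau=4/5$. So there is no in-paper argument to compare against.

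Your proposal is a correct proof and follows the natural route. Each ingredient is sound: integrating \eqref{n_inequality_lemma} against $dt/t$ with the substitution $u=t+s$ and the bound $u/(u-s)\le 1+s/r_0$ does yield $N(r,1/(f-a))\le \tau' N(r+s,f)+O(\log r)$ with $\tau'=\tau(1+s/r_0)<1$ for $r_0$ large; the difference Mohon'ko lemma (via the Halburd--Korhonen--Tohge logarithmic-difference estimate, valid precisely when $\varsigma(f)<1$) turns $P(z,a)\not\equiv 0$ into $m(r,1/(f-a))=o(T(r,f))$; and the same HKT machinery gives $T(r+s,f)=(1+o(1))T(r,f)$, both outside a set of finite logarithmic measure. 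Chaining these with the first main theorem forces $T(r,f)=O(\log r)$ off that exceptional set, and monotonicity of $T$ extends it to all large $r$, contradicting transcendence. One minor remark: you could equally well keep the contradiction at the level of $N$ rather than $T$, but going through $T$ as you do is cleaner since it directly invokes rationality.
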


We conclude that the only possible case when non-rational solutions of hyper-order less than one can exist is when $b(z)=Az+B$ and $c(z)=C$.
\end{proof}

\section{Polynomial degree growth of rational iterates}


In this section we will prove the following.
\begin{theorem}\label{mainthm2}
Let $\{y_j\}_{j\in\N}$ be a sequence of non-constant rational
functions of $z$ solving equation \eqref{maineqD}
where $a_j$, $b_j$ and $c_j\not\equiv0$ are rational
functions of $j$. If the degree of $\{y_j\}_{j\in\N}$ grows at most polynomially in $j$, then $a_j=0$, $b_j=Aj+B$ and $c_j=C+D(-1)^j$, where $A$,
$B$, $C$ and $D$ are constants.
\end{theorem}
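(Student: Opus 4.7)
The proof should follow the same path as Theorem~\ref{mainthm}, with Nevanlinna's counting function $n(r,\cdot)$ replaced by literal counts of zeros and poles of the rational iterates $y_j(z)$ in $\mathbb{C}$, and the continuous shift $z\mapsto z+1$ replaced by the discrete shift $j\mapsto j+1$. First I would run the singularity analysis verbatim: for each sufficiently large $j$ and each zero $z_0\in\mathbb{C}$ of $y_j$ at which the coefficients appearing in a window of radius $4$ around $j$ are finite and (where needed) nonvanishing, expand $y_j, y_{j\pm 1},\ldots,y_{j\pm 4}$ as Laurent series about $z_0$. The local computations are term-for-term those leading to \eqref{seq1}, \eqref{seq2a}, \eqref{seq2b} and \eqref{seq2c} in the proof of Theorem~\ref{mainthm}. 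The same case split shows that unless the singularity confinement conditions
\[
a_{j+1}=0,\qquad b_{j+2}-2b_{j+1}+b_j=0,\qquad c_{j+2}-c_j=0
\]
hold at $j$, the multiplicity of any zero of $y_j$ at such a $z_0$ is bounded above by $4/5$ times the sum of multiplicities of the poles of $y_{j\pm 1},\ldots,y_{j\pm 4}$ at $z_0$.

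Next, let $Z_j$ and $P_j$ denote the number of zeros and poles of $y_j$ in $\mathbb{C}$, counted with multiplicity, so that $\max(Z_j,P_j)=d_j:=\deg y_j$. Summing the local zero-to-pole ratio bound over all non-exceptional zeros and over $j\in[j_0,N]$, and absorbing the bounded number of exceptional points at each $j$ (arising from poles and zeros of the coefficients) into an $O(N)$ error term, one obtains an inequality of the shape
\[
\sum_{j=j_0}^{N} Z_j \;\leq\; \frac{4}{5}\sum_{j=j_0-4}^{N+4} P_j + O(N).
\]

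The main obstacle will be turning this summed zero--pole imbalance into a super-polynomial lower bound on the degree sequence $d_j$, playing the role of Lemma~\ref{nevlemma} in the present setting. I would derive a complementary relation directly from the recurrence: subadditivity of pole counts applied to the rewritten equation $y_{j+1}=-y_{j-1}+a_j+b_j/y_j+c_j/y_j^2$ gives $P_{j+1}\le P_{j-1}+2Z_j+O(1)$, and the singularity confinement conditions above are exactly those which force the cancellation needed to keep this tight. In their absence, iterating this pole-count recursion together with the zero--pole ratio bound should produce an exponential lower bound on the $P_j$, and hence on the $d_j$, contradicting the hypothesis of polynomial growth.

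Once the contradiction is in place, the confinement conditions must hold for all sufficiently large $j$, and hence, since $a_j$, $b_j$ and $c_j$ are rational in $j$, for all $j$. Solving them gives $a_j\equiv 0$, $b_j=Aj+B$ (from the linear second-order recursion $b_{j+2}-2b_{j+1}+b_j=0$), and $c_j$ of period two, i.e.\ $c_j=C+D(-1)^j$, completing the proof.
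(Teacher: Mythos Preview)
Your singularity analysis is right and matches the paper term for term: the local Laurent expansions at a zero of $y_j$, the case split according to whether $a_j\not\equiv 0$ or $a_j\equiv 0$, and the resulting $4/5$ bound on the local zero-to-pole ratio are exactly the computations \eqref{seq1d}--\eqref{seq2cd}. The difficulty is entirely in the final step, where you try to convert this into super-polynomial growth of $d_j$.

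The gap is that both of your global inequalities point the same way. You have $\sum Z_j\le\tfrac45\sum P_j+O(N)$ and you propose the ``complementary'' bound $P_{j+1}\le P_{j-1}+2Z_j+O(1)$; but the latter is again an \emph{upper} bound on poles in terms of zeros. Two upper bounds cannot be iterated to force a lower bound on growth. Concretely, even if your pole recursion were an equality, telescoping gives $P_N+P_{N+1}=O(1)+2\sum_{j_0}^N Z_j$, and feeding this into the $4/5$ bound yields only $P_N+P_{N+1}\le\tfrac{8}{5}\sum P_j+O(N)$, which is perfectly compatible with polynomial (even bounded) growth of $P_j$. Nothing here produces an exponential lower bound. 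The underlying reason is that by counting in $\mathbb{C}$ you have decoupled $Z_j$ from $P_j$: your $\max(Z_j,P_j)=d_j$ allows $Z_j$ to be genuinely smaller than $P_j$, and then the $4/5$ inequality carries no information.

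The paper closes this gap by counting on the Riemann sphere $\mathbb{CP}^1$ rather than on $\mathbb{C}$. For a rational function the number of zeros on $\mathbb{CP}^1$ equals the number of poles on $\mathbb{CP}^1$ equals the degree, so $Z_j=P_j=d_j$ automatically. The local $4/5$ bound, summed over all zeros on the sphere and over $|j|\le n$, then reads
\[
D_n\;\le\;\tfrac{4}{5}\,D_{n+s}+O(1),\qquad D_n:=\sum_{|j|\le n}\deg_z y_j,
\]
for some fixed $s$ (coming from the window width of the singularity pattern). This is a single recursive inequality in one quantity, and it immediately gives $\limsup_{n\to\infty}\tfrac{1}{n}\log D_n\ge\log\tfrac{5}{4}>0$, contradicting polynomial degree growth. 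No complementary inequality is needed. (Incidentally, since $a_j,b_j,c_j$ are constants in $z$ for each $j$, there are no exceptional points in $z$ once $j$ is large enough for the coefficients to be finite and nonzero, so the error term is $O(1)$ rather than your $O(N)$.)
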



The key idea behind the proof is to use the fact that the degree of a rational function is the number of zeros or poles (counting multiplicities) in $\mathbb{CP}^1$.  We will use singularity confinement-type calculations similar to \eqref{sing-conf} to relate the number of zeros and poles of nearby iterates.

\vskip 3mm

\textit{Proof of Theorem \ref{mainthm2}: } The first part of the proof is largely analogous to the first part of the proof of Theorem~\ref{mainthm}, and it consists of determining the relative zero and pole densities of the solution sequence. We assume first that $a_j$ is not identically zero. Therefore, $a_j$, $b_j$ and $c_j$
have finitely many zeros (unless $b_j\equiv0$) and poles, since
they are rational functions of $j$, and so there exists a
$j_0\geq 0$ such that $c_j\ne 0$ for all $j$ satisfying
$|j|\geq j_0$.

We will show that if $|j|$ is sufficiently large, then each zero
of $y_j$ may be uniquely associated with a finite number of poles and
zeros of neighboring iterates $y_i$ such that the number of zeros
divided by the number of poles, both counting multiplicities, is
bounded by $4/5$ in each obtained grouping.

Suppose that the rational function $y_j$ has a zero of
multiplicity $k$ at some point $\hat{z}$ on the complex sphere. By
using a M\"obius transformation, if necessary, we may assume
without loss of generality that $\hat{z}=0$. In the following,
expressions such as ``$y_{j+1}$ has a pole'' will refer to a pole
at $z=0$.  Since $y_j$ has a zero of order $k$, then it follows
from equation \eqref{maineqD} that $y_{j+\sigma}$ has a pole of
order at least $2k$ for $\sigma=1$ or $\sigma=-1$. Since we have
taken $j$ to be sufficiently large, it follows that
$a_{j+\sigma}\not=0$, and so iteration of equation \eqref{maineqD}
gives
    \begin{equation}\label{seq1d}
    \begin{split}
    y_{j} &= \alpha z^{k}+\mathrm{O}(z^{k+1})\\
    y_{j+\sigma} &= \beta_\sigma z^{-2k} + \mathrm{O}({z^{1-2k}})\\
    y_{j+2\sigma}&= a_{j+\sigma}  + \mathrm{O}(z^{k})
    \\
    y_{j+3\sigma} &=  - \beta_\sigma z^{-2k}  + \mathrm{O}(z^{1-2k})
    \\
    y_{j+4\sigma}&= a_{j+3\sigma}-a_{j+\sigma} + \mathrm{O}(z^{k})
    \\
    \end{split}
    \end{equation}
where $\alpha$ and $\beta_{\sigma}$ are non-zero. Therefore, the
zero of $y_{j}$ of order $k$ may be associated with the pole of
$y_{j+\sigma}$ of order $2k$. Note that even if
$a_{j+3\sigma}-a_{j+\sigma}=0$, then we can associate the
available pole of $y_{j+3\sigma}$ with this zero of
$y_{j+4\sigma}$ (or the zero is a starting point of a new sequence of iterates of the type \eqref{seq1d} in a similar way as in the case \eqref{seq1} of meromorphic solutions). Hence the number of zeros divided by the number
of poles in sequence \eqref{seq1d} is less than or equal to 1/2
(counting multiplicities) under the assumption that $a_j\not\equiv0$.

Suppose now that $a_j\equiv0$ so that equation \eqref{maineqD}
reduces to
    \begin{equation}\label{mainaj0d}
    y_{j+1}+y_{j-1}=\frac{b_j y_j +c_j}{y_j^2}.
    \end{equation}
We will again consider the case where $y_j$ has a zero of order
$k$ at $z=0$ assuming first that both $y_{j+\sigma}$ and
$y_{j-\sigma}$ have a pole at least of order $2k$. Then, by
iterating \eqref{mainaj0d}, it follows that
   \begin{equation}\label{seq2ad}
    \begin{split}
    y_{j} &= \alpha z^{k}+\mathrm{O}(z^{k+1})\\
    y_{j+\sigma} &= \beta_\sigma z^{-2k} + \mathrm{O}({z^{1-2k}})\\
    y_{j+2\sigma}&=-\alpha z^{k}+\mathrm{O}(z^{k+1})
    \\
    y_{j+3\sigma} &=  - \beta_\sigma z^{-2k}  + \mathrm{O}(z^{1-2k})
    \\
    y_{j+4\sigma}&= \alpha z^{k}+\mathrm{O}(z^{k+1}),
    \\
    \end{split}
    \end{equation}
where $\alpha\not=0$ and $\beta_{\sigma}\not=0$. In this sequence
the number of zeros divided by the number of poles is less than or
equal to 3/4, when multiplicities are taken into account.

Suppose now that $y_j$ has a zero of order $k$ at $z=0$ and
$y_{j-\sigma}$ has a pole of order $l$ such that $k\leq l<2k$.
Then, by \eqref{mainaj0d},
    \begin{equation}\label{seq2bd}
    \begin{split}
    y_{j-\sigma} &= \beta_\sigma z^{-l}+\mathrm{O}({z^{1-l}})\\
    y_j &= \alpha z^k+\mathrm{O}(z^{k+1})\\
    y_{j+\sigma} &=  \frac{c_{j}}{\alpha^2} z^{-2k}
    + \mathrm{O}({z^{1-2k}})\\
    y_{j+2\sigma} &=-\alpha z^k+\mathrm{O}(z^{k+1})
    \\
    y_{j+3\sigma} &=  \frac{c_{j+2\sigma}-c_j}{\alpha^2} z^{-2k}  + \mathrm{O}(z^{1-2k})
    \\
    \end{split}
    \end{equation}
where $\alpha\not=0$ and $\beta_\sigma\not=0$. The number of zeros
divided by the number of poles in \eqref{seq2bd} is less than or
equal to $2/3$, provided that $y_{j+3\sigma}$ is non-zero. If
$y_{j+3\sigma}$ vanishes, the sequence of iterates starting from
$y_{j+3\sigma}$ of \eqref{seq2bd} becomes a special case of the
sequence \eqref{seq2cd} below. If there are two sequences of the type \eqref{seq2bd} joined together in a similar way as in the meromorphic case \eqref{seq2b}, then the number of zeros
divided by the number of poles in the combined sequence is less than or
equal to $4/5$.

We still need to deal with the case where $y_j$ has a zero of
order $k$ and $y_{j-\sigma}$ has a pole of order $l<k$ (or it
assumes a finite value.) The iteration of equation (\ref{mainaj0d})
yields
    \begin{equation}\label{seq2cd}
    \begin{split}
    y_{j+\sigma} &= c_j y_j^{-2}+b_j y_j^{-1} -y_{j-\sigma}\\
    y_{j+2\sigma}&= -y_j    + \frac{b_{j+\sigma}}{c_j}
    y_j^2-\frac{b_{j+\sigma}b_j}{c_j^2}y_j^3+\mathrm{O}(y_j^4)
    \\
    y_{j+3\sigma} &=  (c_{j+2\sigma}-c_j)y_j^{-2}
    +\left(-b_j+\frac{2c_{j+2\sigma}b_{j+\sigma}}{c_j}-b_{j+2\sigma}\right)y_j^{-1} +y_{j-\sigma} +\mathrm{O}(1).
    \\
    \end{split}
    \end{equation}
There are $2k$ zeros (by taking $y_j$ into account) and $4k$ (or $3k$) poles
in the sequence \eqref{seq2cd}, unless
    \begin{equation}
    \label{abeqnsd}
    c_{j+2\sigma}-c_j=0\mbox{  and
    }b_{j+2\sigma}-2b_{j+\sigma}+b_j=0.
    \end{equation}
Now unless equations (\ref{abeqnsd}) hold for all $j$ then at least
one of them will fail to hold for all sufficiently large $j$. If
these equations hold for all $j$ then $b_j=Aj+B$ and
$c_j=C+D(-1)^j$ for constants $A$, $B$, $C$ and $D$.  Otherwise,
note that $\deg_z y_j$ is the total number of zeros of $y_j$ on
the complex sphere counting multiplicities and it is also the
total number of poles of $y_j$. Since we have been able to
associate each zero of $y_j$ uniquely with an appropriate number of zeros
and poles of iterates $y_i$ close to $y_j$ for all sufficiently large
$j$ such that within each grouping the number of zeros divided by
the number of  poles is less than or equal to $4/5$, we have
    \begin{equation*}
    D_n(y_j)\le \frac45 D_{n+s}(y_j)+O(1),\qquad n\to\infty,
    \end{equation*}
for some $s>0$, where
    \begin{equation*}
    D_n(y_j):=\sum_{j=-n}^n \deg_z(y_j).
    \end{equation*}
It follows that
    \begin{equation*}
    \limsup_{n\to\infty}\frac{1}{n} \log D_n(y_j)\geq
    \log\frac{5}{4}>0,
    \end{equation*}
so the degree growth of $\{y_j\}_{j\in\N}$ is exponential.

\section{Diophantine integrability}
In this section we will consider the solution $(y_j)$ of equation \eqref{maineqD} to be a sequence of rational numbers and we will explore the growth of the height of such solutions subject to the assumptions of Theorem~\ref{DiophantineThm}.
We denote the usual absolute value  on $\mathbb{Q}$ by $|\cdot |_\infty$.  Any nontrivial absolute value on $\mathbb{Q}$ is equivalent to $|\cdot |_\infty$ or to one of the $p$-adic absolute values $|\cdot |_p$, for some prime $p$.  Given a prime $p$, any non-zero $x\in\mathbb{Q}$ can be written as $x=\frac ab p^r$ for $a,\,b,\,r\in\mathbb{Z}$, where $p\not |ab$.  Then the $p$-adic absolute value of $x$ is defined to be $|x|_p=p^{-r}$.  
The $p$-adic absolute values are non-Archimedean, which means they satisfy the strong triangle inequality 
$$
|x+y|_p\le \max\left\{|x|_p,|y|_p\right\},
$$ 
for all $x$ and $y\in\mathbb{Q}$.

An important identity for our calculations is the following expression for the logarithmic height 
in terms of absolute values
$$
h(x)=\sum_{p\le\infty}\log^+|x|_p,
$$
where the sum is over all primes $p$ as well as the ``prime at infinity'' $p=\infty$.

We begin by fixing a particular absolute value $|\cdot|_p$ on $\mathbb{Q}$ and we use this to determine a certain length scale $\epsilon_n$ for the $n$th iterate.  Since the coefficient functions are rational functions of $n$, then for sufficiently large $n$, they are either identically zero or they are finite and non-zero.  From now on we work with sufficiently large $n$ in this sense.  For some $0<\delta< 1/2$
we define $\epsilon_n$ by
\begin{equation}
\label{epsilon1}
\begin{split}
\epsilon_n^{-\delta} = \kappa_p & \max\{1, \absval{c_n}{p}^{-1}, \absval{b_n}{p}, \absval{a_n}{p},\\
& \absval{c_{n+1}}{p}, \absval{c_{n-1}}{p}, \absval{b_{n+1}}{p}, \absval{b_{n-1}}{p}, \absval{a_{n+1}}{p}^{-1},\absval{a_{n-1}}{p}^{-1}\},
\end{split}
\end{equation}
where  $\kappa_p = 1$ $\forall$  $p < \infty$ and $\kappa_\infty = 3$.
The following lemma, the proof of which is elementary and can be found in \cite{halburdm}, relates small and large values of $y_m$ with respect to the given absolute value.

\begin{lemma}\label{DiophantineLemma1} 
Fix a prime $p\ \leq \ \infty$. Let $(y_n)$ be a solution to equation (\ref{maineqD}) where $a_nc_n\not\equiv 0$. Suppose that for some integer $m$, $\absval{y_m}{p} < \epsilon_m$, where $\epsilon_m$ is defined by
\eqref{epsilon1}.  Then either 
$$\absval{y_{m+1}}{p} \geq \frac{1}{\absval{y_m}{p}^{2-\delta}} \hbox{ and } \absval{y_{m+2}}{p} \geq \epsilon_{m+2},$$
or
$$\absval{y_{m-1}}{p} \geq \frac{1}{\absval{y_m}{p}^{2-\delta}} \hbox{ and } \absval{y_{m-2}}{p} \geq \epsilon_{m-2}.$$
\end{lemma}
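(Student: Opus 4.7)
The strategy is a $p$-adic analogue of the singularity confinement expansion \eqref{sing-conf}: the smallness of $|y_m|_p$ makes $c_m/y_m^2$ dominate the right-hand side of \eqref{maineqD} at step $m$, so one of the neighbouring iterates $y_{m+\sigma}$ (with $\sigma\in\{+1,-1\}$) must absorb the resulting blow-up; and once $|y_{m+\sigma}|_p$ is known to be large, the equation shifted to index $m+\sigma$ shows that $y_{m+2\sigma}$ is approximately $a_{m+\sigma}$, which is in turn bounded below by the definition of $\epsilon_{m+2\sigma}$. The quantities $\epsilon_n$ in \eqref{epsilon1} are set up precisely so that the required dominance estimates hold uniformly across all absolute values.

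Concretely, at the first step I would write \eqref{maineqD} at $n=m$ as
\[
y_{m+1}+y_{m-1}\ =\ \frac{c_m}{y_m^2}+\frac{b_m}{y_m}+a_m,
\]
and combine $|c_m|_p\ge\kappa_p\epsilon_m^\delta$ and $|a_m|_p,|b_m|_p\le\kappa_p^{-1}\epsilon_m^{-\delta}$, both read off from \eqref{epsilon1}, with the hypothesis $|y_m|_p<\epsilon_m\le 1$ and the restriction $\delta<1/2$, which yields $|y_m|_p<\epsilon_m\le\epsilon_m^{2\delta}$. This makes $|c_m/y_m^2|_p$ dominate the other two terms by a factor large enough to survive the ordinary triangle inequality at $p=\infty$ (here $\kappa_\infty=3$ is doing the work). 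One then extracts
\[
\max\bigl(|y_{m+1}|_p,\,|y_{m-1}|_p\bigr)\ \ge\ \frac{1}{|y_m|_p^{\,2-\delta}},
\]
and fixes $\sigma\in\{+1,-1\}$ to be an index achieving this maximum. For the second step I would apply \eqref{maineqD} at index $m+\sigma$ in the form
\[
y_{m+2\sigma}\ =\ a_{m+\sigma}+\frac{b_{m+\sigma}}{y_{m+\sigma}}+\frac{c_{m+\sigma}}{y_{m+\sigma}^2}-y_m.
\]
The smallness of $|y_m|_p$ and the largeness of $|y_{m+\sigma}|_p\ge|y_m|_p^{-(2-\delta)}$ make each of the three correction terms on the right substantially smaller than $|a_{m+\sigma}|_p$; and since $|a_{m+\sigma}|_p^{-1}$ appears inside the maximum defining $\epsilon_{m+2\sigma}^{-\delta}$, one has $|a_{m+\sigma}|_p\ge\kappa_p\epsilon_{m+2\sigma}^\delta\ge\epsilon_{m+2\sigma}$, so a final triangle-inequality step delivers $|y_{m+2\sigma}|_p\ge\epsilon_{m+2\sigma}$.

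The main obstacle is purely a matter of bookkeeping the constants. For non-Archimedean $p$ the strong triangle inequality converts each of the comparisons above into an equality and the argument is almost automatic. For $p=\infty$ each application of the ordinary triangle inequality loses a multiplicative constant, and one has to check that the constants built into \eqref{epsilon1}, in particular the choice $\kappa_\infty=3$ together with the restriction $\delta<1/2$, have been calibrated tightly enough that the exponent $2-\delta$ and the threshold $\epsilon_{m\pm 2}$ come out exactly at both steps of the argument, uniformly in the chosen sign $\sigma$.
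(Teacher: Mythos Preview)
The paper does not actually prove Lemma~\ref{DiophantineLemma1}; it states that ``the proof \ldots\ is elementary and can be found in \cite{halburdm}'' and moves on. So there is no in-paper argument to compare against, only the description ``elementary'' and the surrounding context of the singularity-confinement heuristic \eqref{sing-conf}.

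Your sketch is exactly the argument one would reconstruct from that context, and it is correct in outline. A couple of remarks on the bookkeeping you flag. For the first step, the clean way to get the exponent $2-\delta$ is to use $|c_m|_p\ge\kappa_p\epsilon_m^{\delta}>\kappa_p|y_m|_p^{\delta}$ directly, so that $|c_m/y_m^2|_p>\kappa_p|y_m|_p^{-(2-\delta)}$; the factor $\kappa_\infty=3$ then absorbs both the subtraction of $|b_m/y_m|_\infty+|a_m|_\infty$ and the factor $2$ lost when passing from $|y_{m+1}+y_{m-1}|_\infty$ to $\max(|y_{m+1}|_\infty,|y_{m-1}|_\infty)$. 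For the second step, the point you should make explicit is that the three correction terms $y_m$, $b_{m+\sigma}/y_{m+\sigma}$, $c_{m+\sigma}/y_{m+\sigma}^2$ are each bounded by a fixed multiple of $\epsilon_m$ (here $\delta<1/2$ ensures $2-2\delta>1$ and $4-3\delta>1$), hence by a small fraction of $|a_{m+\sigma}|_p\ge\kappa_p\epsilon_m^{\delta}$; once you know $|y_{m+2\sigma}|_p$ is comparable to $|a_{m+\sigma}|_p$, the bound $|a_{m+\sigma}|_p\ge\kappa_p\epsilon_{m+2\sigma}^{\delta}\ge\epsilon_{m+2\sigma}$ (coming from the appearance of $|a_{(m+2\sigma)-\sigma}|_p^{-1}$ in the definition of $\epsilon_{m+2\sigma}$) finishes the job. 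This two-sided use of $|a_{m+\sigma}|_p$---once through $\epsilon_m$ to control the error, once through $\epsilon_{m+2\sigma}$ to get the conclusion---is the only slightly non-obvious link in the chain.
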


\begin{lemma}
If $(y_j)$ is a solution of equation \eqref{maineqD} satisfying the assumptions of Theorem \ref{DiophantineThm}, then $\gamma_n\equiv 0$.
\end{lemma}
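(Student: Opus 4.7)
\textit{Proof plan.} We wish to show that the coefficient of $y_j^2$ in the numerator (i.e.\ $a_n$, evidently denoted $\gamma_n$ in the statement) vanishes identically, and the plan is to mirror the opening moves of Theorems~\ref{mainthm} and~\ref{mainthm2}, transposing the pole/zero bookkeeping into heights measured against every absolute value of $\mathbb{Q}$ simultaneously. Assume, for contradiction, that $a_n\not\equiv 0$; then by rationality there exists $n_0$ such that $a_n$, $a_{n\pm 1}$, $c_n$, $c_{n\pm 1}$ are all nonzero for $n\ge n_0$.

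Fix an absolute value $|\cdot|_p$ on $\mathbb{Q}$ and a large $n$ with $|y_n|_p<\epsilon_n$. Lemma~\ref{DiophantineLemma1} furnishes $\sigma=\sigma(n,p)\in\{\pm 1\}$ with $|y_{n+\sigma}|_p\ge|y_n|_p^{-(2-\delta)}$ and $|y_{n+2\sigma}|_p\ge \epsilon_{n+2\sigma}$. To produce the second pole of the singularity-confinement sequence \eqref{sing-conf}, rewrite the shifted equation as
$$
y_{n+2\sigma}+y_n = a_{n+\sigma}+\frac{b_{n+\sigma}}{y_{n+\sigma}}+\frac{c_{n+\sigma}}{y_{n+\sigma}^2},
$$
so that the two right-hand tail terms are $p$-adically negligible and $|y_{n+2\sigma}|_p$ is comparable to $|a_{n+\sigma}|_p$, hence uniformly bounded in the coefficient heights. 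Iterating once more gives
$$
y_{n+3\sigma} = -y_{n+\sigma}+a_{n+2\sigma}+\frac{b_{n+2\sigma}}{y_{n+2\sigma}}+\frac{c_{n+2\sigma}}{y_{n+2\sigma}^2},
$$
with a bounded tail, so $|y_{n+3\sigma}|_p\ge \tfrac12|y_n|_p^{-(2-\delta)}$. Each $p$-adic zero of $y_n$ of strength $\log^+|1/y_n|_p$ thereby generates two $p$-adic poles, at positions $n+\sigma$ and $n+3\sigma$, each of strength at least $(2-\delta)\log^+|1/y_n|_p$ up to a bounded additive error.

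Summing over $n\le r$ and over all $p\le\infty$, and noting that each iterate $y_m$ is targeted at most $O(1)$ times since the offsets lie in $\{\pm 1,\pm 3\}$, one uses $h(y_n)=\sum_p\log^+|y_n|_p=\sum_p\log^+|1/y_n|_p$ to obtain
$$
\sum_{n\le r+3}h(y_n)\ \ge\ 2(2-\delta)\sum_{n\le r}h(y_n)-E(r),
$$
where $E(r)$ absorbs the contributions from $|y_n|_p\in[\epsilon_n,\epsilon_n^{-1}]$ together with the additive slack in the geometric-series estimates. By the calibration of $\epsilon_n$ in \eqref{epsilon1}, $E(r)=O\!\left(\sum_{n\le r}(h(a_n)+h(b_n)+h(c_n))\right)$, and the admissibility assumption \eqref{admissible} then makes $E(r)=o\!\left(\sum_{n\le r}h(y_n)\right)$. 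The displayed inequality becomes $(3-2\delta+o(1))\sum_{n\le r}h(y_n)\le 0$, forcing $\sum h(y_n)$ to remain bounded and contradicting \eqref{admissible}.

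The chief obstacle is making the two-step expansion of $y_{n+2\sigma}$ and $y_{n+3\sigma}$ genuinely effective at every place simultaneously, so that the implicit constants in the height estimate are absorbable into $E(r)$. The role of the factor $\kappa_p$ and the definition of $\epsilon_n$ in \eqref{epsilon1} is precisely to ensure that the geometric series in $1/y_{n+\sigma}$ and $1/y_{n+2\sigma}$ converge $p$-adically with errors controlled by heights of the coefficients, uniformly across Archimedean and non-Archimedean places; once this is in hand, the height-summation step is essentially a transcription of the counting-function argument used to establish \eqref{n_inequality}.
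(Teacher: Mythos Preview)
Your overall strategy is the right one, but it is more elaborate than what the paper actually does, and in two places the extra elaboration creates genuine gaps.

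\textbf{How the paper argues.} The paper does \emph{not} chase the singularity out to $n+3\sigma$. It uses Lemma~\ref{DiophantineLemma1} exactly once: for a fixed $p$ it splits $\{r_0,\dots,r\}$ into $S_1=\{n:|y_n|_p<\epsilon_n\}$ and $S_2$, and for $n\in S_1$ takes the single large neighbour $y_{n+\sigma}$ with $|y_{n+\sigma}|_p\ge|y_n|_p^{-(2-\delta)}$. The second clause of the lemma, $|y_{n+2\sigma}|_p\ge\epsilon_{n+2\sigma}$, makes the assignment $n\mapsto n+\sigma$ \emph{injective} on $S_1$, so with no over-counting one gets
\[
\sum_{k\in S_1}\log^+\frac{1}{|y_k|_p}\ \le\ \frac{1}{2-\delta}\sum_{k=r_0-1}^{r+1}\log^+|y_k|_p .
\]
The $S_2$ part is bounded by $\sum\log^+\epsilon_k^{-1}$, which unwinds via \eqref{epsilon1} into coefficient heights. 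Summing over $p$ yields $\sum_{k\le r}h(y_k)\le\frac{1}{2-\delta}\sum_{k\le r+1}h(y_k)+o(\sum h(y_k))$, and since $\tfrac{1}{2-\delta}<1$ this forces $h(y_{r+1})$ to be comparable to the whole partial sum, contradicting \eqref{Diobnd}.

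\textbf{Where your argument is incomplete.} First, the over-counting: you assert that each $m$ is ``targeted at most $O(1)$ times'' by the map $n\mapsto\{n+\sigma,n+3\sigma\}$. An unspecified constant here is fatal, since the displayed inequality only survives if that constant is strictly less than $2(2-\delta)$. In fact the collision $n,n+4\in S_1$ with $\sigma(n)=+1$, $\sigma(n+4)=-1$ is not excluded by Lemma~\ref{DiophantineLemma1}, and then both zeros claim the \emph{same} two poles at $n+1,n+3$. (With more work one can bound the multiplicity by $2$, which would still suffice, but you have not done this.) Second, to control $|y_{n+3\sigma}|_p$ you need $|c_{n+2\sigma}/y_{n+2\sigma}^2|_p$ and $|b_{n+2\sigma}/y_{n+2\sigma}|_p$ to be dominated by $|y_{n+\sigma}|_p$; this requires $\epsilon_n$ to encode $|c_{n\pm2}|_p$, $|b_{n\pm2}|_p$, $|a_{n\pm1}|_p^{-1}$, etc., which the definition \eqref{epsilon1} does not provide. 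Finally, your last displayed line has the inequality reversed: the correct rearrangement gives $h(y_{r+1})+h(y_{r+2})+h(y_{r+3})\ge(3-2\delta-o(1))\sum_{n\le r}h(y_n)$, which forces exponential growth and contradicts \eqref{Diobnd}, not \eqref{admissible}.

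In short: the paper's one-pole argument is both simpler and cleaner; your two-pole version can likely be repaired, but as written it does not close.
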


\begin{proof}
Assume that $\gamma_n\not\equiv 0$.  Let $|\cdot|$ denote the absolute value corresponding to the prime $p$. For sufficiently large $r_0$, define
\begin{eqnarray*}
S_1(r) & = & \{ n : r_0 \leq n \leq r \hbox{ and } \abs{y_n} < \epsilon_n\}\\
S_2(r) & = & \{ n : r_0 \leq n \leq r \hbox{ and } \abs{y_n} \geq \epsilon_n\},
\end{eqnarray*}
Now  
\begin{equation}
\sum_{k=r_0}^{r}\log^+{\frac{1}{\abs{y_k}}} = \sum_{k \in S_1(r)}\log^+{\frac{1}{\abs{y_k}}} + \sum_{k \in S_2(r)}\log^+{\frac{1}{\abs{y_k}}}.\label{S1S2Sum} 
\end{equation}
Using Lemma \ref{DiophantineLemma1} gives
\begin{equation}
\sum_{k\in S_1(r)}\log^+{\abs{\frac{1}{y_k}}}  
 \leq \frac{1}{2-\delta} \sum_{k=r_0-1}^{r+1}\log^+{\abs{y_k}}.\label{S1bound}
\end{equation}
Also
\begin{eqnarray*}
\sum_{k \in S_2(r)}\log^+{\frac{1}{\abs{y_k}}} & \leq & \sum_{k \in S_2(r)}\log^+{\epsilon_k^{-1}} \leq \sum_{k=r_0}^{r}\log^+{\epsilon_k^{-1}} \\
& = & \frac{1}{\delta}\sum_{k=r_0}^{r}\log^+{}\left( \kappa_p\max\{1, \abs{c_k}^{-1}, \abs{b_k}, \abs{a_k}, \abs{c_{k+1}}, \abs{c_{k-1}}, \right.\\ 
& & \left.  \abs{b_{k+1}}, \abs{b_{k-1}}, \abs{a_{k+1}}^{-1},\abs{a_{k-1}}^{-1}\}\right) \\
& \leq & \frac{1}{\delta}\left((r - r_0)\log^+{\kappa_p} + \sum_{k=r_0 - 1}^{r+1}\left[ \log^+{\abs{c_k}^{-1}} + 3\log^+{\abs{b_k}} \right. \right. \nonumber\\
&&+\left. \left. \log^+{\abs{a_k}} + 2\log^+{\abs{c_k}} + 2\log^+{\abs{a_k}^{-1}}\right]\right) \label{S2bound},
\end{eqnarray*}
where if $\abs{b_k} = 0$ it is excluded from the list.

Recall that $$ h(x) = \sum_{p\leq \infty}\log^+{\absval{x}{p}} = h\left(\frac{1}{x}\right) = \sum_{p\leq\infty}\log^+{\absval{\frac{1}{x}}{p}}.$$
So taking the sum over all primes $p\le\infty$  in equation (\ref{S1S2Sum}) gives
\begin{equation}
\sum_{k = r_0}^{r}h(y_k) \leq \frac{1}{2-\delta}\sum_{k = r_0-1}^{r+1}h(y_k) + \frac{1}{\delta}\left(3\sum_{k=r_0-1}^{r+1}(h(\alpha_k)+h(\beta_k) + h(\gamma_k)) + (r-r_0)\log{3}\right).
\end{equation}
So from \eqref{admissible}, we have $$\sum_{k = r_0}^{r}h(y_k) \leq \frac{1}{2-\delta}\sum_{k = r_0-1}^{r+1}h(y_k) + o\left(\sum_{k = r_0}^{r}h(y_k)\right),$$
which is impossible if $y_k$ satisfies \eqref{Diobnd}.
\end{proof}

\vskip 3mm

We conclude this section by quoting another lemma from \cite{halburdm} which bears a strong similarity to the singularity confinement-type calculation in \eqref{sing-conf}.  We do not give the precise definition of $\epsilon_k$ here but merely note that it depends on the absolute values of various combinations of the coefficient functions.
\begin{lemma} Let $(y_n)_{n=k-1}^{k+3} \subseteq \Q /\{0\}$ with $k-1 \geq r_0$ satisfy \begin{equation}y_{n+1} + y_{n-1} = \frac{c_n + b_n y_n}{y_n^2}.\label{singconeq}\end{equation} If $\abs{y_{k-1}} \leq \abs{y_{k}}^{-1/2}$ and, for sufficiently small $\delta > 0$, $\abs{y_k} < \epsilon_k$ then
\begin{enumerate}
\item $y_{k+1} = \frac{c_k}{y_k^2} + \frac{b_k}{y_k} + A_k,$ where $\abs{A_k}\leq \abs{y_k}^{-1/2}$.\\
\item $y_{k+2} = -y_k + \frac{b_{k+1}}{c_k}y_k^2 + B_k,$ where $\abs{B_k} \leq \abs{y_k}^{3-4\delta}$\\
\item $y_{k+3} = \frac{c_{k+2} - c_k}{y_{k+2}^2} + \frac{b_{k+2}- 2 \frac{c_{k+2}}{c_k}b_{k+1} + b_k}{y_{k+2}} + C_k$, where
$$\abs{C_k} \leq \max\left\{\abs{\frac{c_{k+2} - c_k}{c_k}}\abs{y_{k+2}}^{1-\delta}, \abs{y_{k+2}}^{-1/2}\right\}$$
 for non-Archimedean absolute values and 
 $$\abs{C_k} \leq 2\abs{\frac{c_{k+2} - c_k}{c_k}}\abs{y_{k+2}}^{1-\delta} + 3\abs{y_{k+2}}^{-1/2}$$ 
 for Archimedean absolute values.\\
\item $\abs{y_{k+2}} = \abs{y_k}$ for non-Archimedean absolute values and $\frac{36}{25}\abs{y_k} > \abs{y_{k+2}} > \frac{16}{25}\abs{y_k}$ for Archimedean absolute values. 
\end{enumerate}
\label{singularityconfinementtheorem}
\end{lemma}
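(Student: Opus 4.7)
The plan is to iterate the recurrence \eqref{singconeq} forward from $y_k$ three times, using the size control $|y_k|<\epsilon_k$ together with the coefficient bounds implicit in the definition of $\epsilon_k$ to expand every reciprocal as a convergent geometric series while carefully tracking the remainders. Item (1) is immediate: solving \eqref{singconeq} at $n=k$ for $y_{k+1}$ gives $y_{k+1}=c_k/y_k^2+b_k/y_k-y_{k-1}$, so one may take $A_k=-y_{k-1}$, and the hypothesis $|y_{k-1}|\le|y_k|^{-1/2}$ supplies the stated bound. For (2) I would apply \eqref{singconeq} at $n=k+1$ and substitute (1), writing $y_{k+1}=(c_k/y_k^2)(1+u)$ with $u=(b_k/c_k)y_k+(A_k/c_k)y_k^2$. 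The defining inequalities for $\epsilon_k$ yield $|b_k/c_k|\le|y_k|^{-2\delta}$ and $|A_k/c_k|\le|y_k|^{-1/2-\delta}$, hence $|u|\le C|y_k|^{1-2\delta}<1$ for small $\delta$. Expanding $1/y_{k+1}$ and $1/y_{k+1}^2$ geometrically and keeping only the leading $-y_k$ together with the next-order $(b_{k+1}/c_k)y_k^2$ yields (2); every omitted contribution is of size at most $|y_k|^{3-4\delta}$, since the contribution from $c_{k+1}/y_{k+1}^2$ is already of order $|y_k|^{4-3\delta}$.

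Statement (3) is obtained in the same spirit from $y_{k+3}=c_{k+2}/y_{k+2}^2+b_{k+2}/y_{k+2}-y_{k+1}$. Here I would use (2) to write $y_{k+2}=-y_k(1+v)$ with $|v|\le C|y_k|^{1-2\delta}$, expand $y_{k+2}^{-2}$ and $y_{k+2}^{-1}$ around $(-y_k)^{-2}$ and $(-y_k)^{-1}$, and substitute (1) for $y_{k+1}$. The crucial algebraic observation is that after collecting terms, the coefficient of $y_{k+2}^{-2}$ becomes $c_{k+2}-c_k$ and the coefficient of $y_{k+2}^{-1}$ becomes $b_{k+2}-2(c_{k+2}/c_k)b_{k+1}+b_k$, which are precisely the combinations that drive the singularity confinement heuristic \eqref{sing-conf}. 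The remainder $C_k$ is the sum of the remaining geometric tails and cross terms. In the non-Archimedean case the ultrametric inequality reduces $|C_k|$ to the maximum of its two dominant contributions, while in the Archimedean case the ordinary triangle inequality multiplies each summand by an explicit constant traceable through the $\kappa_\infty=3$ factor, yielding the constants $2$ and $3$ in the bound on $C_k$.

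Finally, (4) is a direct consequence of (2): the subleading terms in $y_{k+2}=-y_k+(b_{k+1}/c_k)y_k^2+B_k$ have sizes at most $|y_k|^{2-2\delta}$ and $|y_k|^{3-4\delta}$ respectively, both strictly smaller than $|y_k|$ for sufficiently small $\delta$ and $|y_k|$; the ultrametric inequality then forces $|y_{k+2}|=|y_k|$ in the non-Archimedean case, while the quantitative Archimedean triangle inequality with the $\kappa_\infty=3$ constants yields $\tfrac{16}{25}|y_k|<|y_{k+2}|<\tfrac{36}{25}|y_k|$. The principal obstacle is the bookkeeping in (3): several geometric expansions must be composed, and the $y_{k+2}^{-2}$ and $y_{k+2}^{-1}$ parts isolated cleanly from everything absorbed into $C_k$, while maintaining sharp enough constant tracking in the Archimedean case for the explicit $2$ and $3$ to emerge correctly.
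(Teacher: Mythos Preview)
The paper does not prove this lemma: it is explicitly quoted from \cite{halburdm} (see also \cite{morgan:10}), and the authors even decline to state the precise definition of $\epsilon_k$. There is therefore no in-paper proof to compare your proposal against.

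That said, your outline is the natural and essentially correct approach, and is almost certainly what appears in the cited source: iterate the recurrence forward, write $y_{k+1}=(c_k/y_k^2)(1+u)$ with $u$ small, expand $y_{k+1}^{-1}$ and $y_{k+1}^{-2}$ geometrically, and track remainders using the coefficient bounds packaged into $\epsilon_k$. Your identification $A_k=-y_{k-1}$ for (1) is exactly right, the cancellations producing $c_{k+2}-c_k$ and $b_{k+2}-2(c_{k+2}/c_k)b_{k+1}+b_k$ in (3) are precisely the singularity-confinement combinations \eqref{sing-conf}, and the ultrametric-versus-triangle-inequality dichotomy gives (4) as you describe. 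The only genuine caveat is that several of your intermediate estimates (for instance $|b_k/c_k|\le|y_k|^{-2\delta}$, or the specific Archimedean constants $16/25$ and $36/25$) depend on the unstated definition of $\epsilon_k$ for this lemma; with a suitable choice extending \eqref{epsilon1} to include the needed combinations of coefficients through index $k+2$, all of your bounds go through.
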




\bibliographystyle{amsplain}

\providecommand{\href}[2]{#2}

\end{document}